\definecolor{boubelcolor}{rgb}{.65,0.05,0}
\newtheorem{theorem}{Theorem}[section]
\newtheorem{lemma}[theorem]{Lemma}
\newtheorem{proposition}[theorem]{Proposition}
\newtheorem{remark}[theorem]{Remark}
\numberwithin{equation}{section}
\newcommand{\de}{\, \mathrm{d}}
\newcommand{\leqc}{\leq_{c}}
\newcommand{\Id}{\mathsf{Id}}
\newcommand{\R}{\mathbb{R}}
\begin{document}
\title[Instability of Martingale optimal transport]{Instability of Martingale optimal transport \\ in dimension $\boldsymbol{d \geq 2}$}
%Shadow martingales.
%From a peacocks to martingales with the predictable representation property
\author{Martin Br\"uckerhoff \address[Martin Br\"uckerhoff]{Universit\"at M\"unster, Germany} \email{martin.brueckerhoff@uni-muenster.de} \hspace*{0.5cm} Nicolas Juillet \address[Nicolas Juillet]{Université de Strasbourg et CNRS, France} \email{nicolas.juillet@math.unistra.fr} } 
\thanks{MB is funded by the Deutsche Forschungsgemeinschaft (DFG, German Research Foundation) under Germany's Excellence Strategy EXC 2044 –390685587, Mathematics Münster: Dynamics–Geometry–Structure. }

\begin{abstract}
Stability of the value function and the set of minimizers w.r.t.\  the given data is a desirable feature of optimal transport problems. For the classical Kantorovich transport problem, stability is satisfied under mild assumptions and in general frameworks such as the one of Polish spaces. However, for the martingale transport problem several  works based on different strategies established stability results for $\R$ only. We show that the restriction to dimension $d=1$ is not accidental by presenting a sequence of marginal distributions on $\mathbb{R}^2$ for which the martingale optimal transport problem is neither stable w.r.t.\ the value nor the set of minimizers. Our construction adapts to any dimension $d \geq 2$. For $d\geq 2$ it also provides a contradiction to the martingale Wasserstein inequality established by Jourdain and Margheriti in $d=1$.

	\smallskip
	
	\normalem

	\noindent\emph{Keywords:} % skorokhod embedding, shadows, optimal transport \\
	\emph{Mathematics Subject Classification (2010):} %Primary 60G42, 60G44; Secondary 91G20.
\end{abstract}

\date{\today}
\maketitle

\section{Introduction}

For two probability measures $\mu$ and $\nu$ on $\mathbb{R}^d$ let $\Pi(\mu,\nu)$ denote the set of all couplings between $\mu$ and $\nu$, i.e.\ the set of all probability measures on $\mathbb{R}^d \times \mathbb{R}^d$ which have marginal distributions $\mu$ and $\nu$. Let $c:\R^d\times \R^d\to \R$ be measurable and integrable with respect to the elements of $\Pi(\mu,\nu)$. The classical optimal transport problem is given by 
\begin{equation} \label{eq:OT} \tag{OT}
	V_c(\mu,\nu) = \inf_{\pi \in \Pi(\mu,\nu)} \int _{\mathbb{R}^d \times \mathbb{R}^d} c(x,y) \de \pi(x,y).
\end{equation}
For the cost function $c_1(x,y) := \Vert y-x\Vert$ (where $\Vert \cdot \Vert$ is the Euclidean norm) the $1$-Wasserstein distance $\mathcal{W}_1 := V_{c_1}$ is a metric on $\mathcal{P}_1(\mathbb{R}^d)$, the space of probability measures $\mu$ that satisfy $\int_{\R^d} \|x\| d\mu(x)<\infty$.

Two probability measures $\mu, \nu \in \mathcal{P}_1(\mathbb{R}^d)$ are said to be in convex order, denoted by $\mu \leqc \nu$, if $\int _{\mathbb{R}^d} \varphi \de \mu \leq \int _{\mathbb{R}^d} \varphi \de \nu$ for all convex functions $\varphi \in L^1(\nu)$. If $\mu \leqc \nu$, Strassen's theorem yields that there exists at least one martingale coupling between $\mu$ and $\nu$. A martingale coupling is a coupling $\pi \in \Pi(\mu,\nu)$ for which there exists a disintegration $(\pi_x)_{x \in \mathbb{R}^d}$ such that 
\begin{equation} \label{eq:MartDis}
	\int _{\mathbb{R}} y  \de \pi _x (y) = x \quad \text{for } \mu\text{-a.e. } x \in \mathbb{R}^d. 
\end{equation}
If $\mu \leqc \nu$,  the martingale optimal transport problem is given by
\begin{equation} \label{eq:MOT} \tag{MOT}
	V^M_c(\mu,\nu) = \inf_{\pi \in \Pi_M(\mu,\nu)} \int _{\mathbb{R}^d \times \mathbb{R}^d} c(x, y) \de \pi(x,y)
\end{equation}
where $\Pi_M(\mu,\nu)$ denotes the set of all martingale couplings between $\mu$ and $\nu$.

\subsection*{Stability  in d $\boldsymbol{=1}$}
Let us recall two reasons why stability results are crucial from an applied perspective. Firstly, they enable the strategy of approximating the problem by a discretized problem or by any other that can rapidly be solved computationally (cf.\ \cite{AlCoJo20,JoPa20}). Secondly, any application to noisy data would require stability for the results to be meaningful.
In relation with \eqref{eq:MOT}, this discussion is motivated by its connection to (robust) mathematical finance (cf.\  \cite{BeHePe13,He17}).

Let $\mu, \nu \in \mathcal{P}_1(\mathbb{R})$ with $\mu \leqc \nu$, and $(\mu_n)_{n \in \mathbb{N}}$ and $(\nu_n)_{n \in \mathbb{N}}$ be sequences of probability measures on $\mathbb{R}$ with finite first moment such that $\lim _{n \rightarrow \infty}\mathcal{W}_1(\mu_n, \mu) = 0,$ $\lim_{n \rightarrow \infty} \mathcal{W}_1(\nu_n,\nu)=0$ and $\mu _n \leqc \nu_n$ for all $n \in \mathbb{N}$. The following stability results are available:
\begin{itemize}
	
	\item [(S1)] \textit{Accumulation Points of Minimizers:}
	Let $c$ be a continuous cost function which is sufficiently integrable (e.g.\ $|c(x,y)|\leq A(1+|x|+|y|)$)  and let $\pi_n$ be a minimizer of the \eqref{eq:MOT} problem between $\mu_n$ and $\nu_n$ for all $n \in \mathbb{N}$. Any weak accumulation point of $(\pi_n)_{n \in \mathbb{N}}$ is a minimizer of \eqref{eq:MOT} between $\mu$ and $\nu$.
	
	\item [(S2)] \textit{Continuity of the Value:} Let $c$ be a continuous cost function which is sufficiently integrable (e.g.\ $|c(x,y)|\leq A(1+|x|+|y|)$).
	There holds
	\begin{equation*} 
		\lim_{n \rightarrow \infty} V^M_c(\mu_n,\nu_n) = V^M_c(\mu,\nu).
	\end{equation*}

	\item [(S3)] \textit{Approximation:} For all $\pi \in \Pi_M(\mu,\nu)$ there exists a sequence $(\pi _n)_{n \in \mathbb{N}}$ of martingale couplings between $\mu_n$ and $\nu_n$ that converges weakly to $\pi$.
\end{itemize}

This constitutes the heart of the theory of stability recently consolidated for the martingale transport problem on the real line. Before we go more into the details of the literature let us stress that with (S3) any minimizer can be approximated by a sequence of martingale transport with prescribed marginals. Therefore, under mild assumptions (S3) implies (S2).
Moreover, due to the tightness of $\bigcup_{n \in \mathbb{N}} \Pi_M(\mu_n,\nu_n)$, (S2) implies (S1). 

Early versions of (S1) and (S2) for special classes of cost-functions were obtained by Juillet \cite{Ju14} and later by Guo and Obloj \cite{GuOb17}.  The general version of (S1) and (S2) was first shown by Backhoff-Veraguas and Pammer \cite[Theorem 1.1, Corollary 1.2]{BaPa19} and Wiesel \cite[Theorem 2.9]{Wi20}. Only very recently, Beiglböck, Jourdain, Margheriti and Pammer \cite{BeJoMaPa20} have proven (S3). 
We want to stress that (S1), (S2) and (S3) are given in a minimal formulation and that in the articles some aspects of the results are notably stronger. For instance, the cost function $c$ in (S1) and (S2) can be replaced by a uniformly converging sequence $(c_n)_{n\in \mathbb N}$ \cite{BaPa19}. Moreover, it is an important achievement that on top of weak convergence we have convergence w.r.t.\  (an extension of) the adapted Wasserstein metric for the approximation in (S3) \cite{BeJoMaPa20} and for the convergence in (S1) \cite{BeJoMaPa20b}, see also \cite{Wi20}.  Finally, these stability results also hold for weak martingale optimal transport which is an extension of \eqref{eq:MOT} w.r.t.\ the structure of the cost function (cf.\ \cite[Theorem 2.6]{BeJoMaPa20b}). For further details we invite the interested reader to directly consult the articles.

The martingale Wasserstein inequality introduced by Jourdain and Margheriti  in \cite[Theorem 2.12]{JoMa20} belongs also to the context of stability and approximation and it appears for example as the important last step in the proof of (S3) in \cite{BeJoMaPa20}. In dimension $d= 1$ there exists a constant $C > 0$ independent of $\mu$ and $\nu$ such that 
\begin{equation} \label{eq:StabilityEq}\tag{MWI}
	\mathcal{M}_1(\mu,\nu) \leq C \mathcal{W}_1(\mu,\nu).
\end{equation}
where $\mathcal{M}_1(\mu,\nu)$ is the value of the \eqref{eq:MOT} problem between $\mu$ and $\nu$ w.r.t.\ the cost function $\Vert x-y \Vert$.
Moreover, they proved that $C = 2$ is sharp. 
%This bound is attained through the family of measures $\mu=\frac12(\delta_{-a}+\delta_a)$, $\nu=\frac12(\delta_{-a-\epsilon}+\delta_{a+\epsilon})$ when $\epsilon>0$ goes to zero. 
For their proof Jourdain and Margheriti introduce a family of martingale couplings $\pi \in\Pi_M(\mu,\nu)$ that satisfy $\int _{\mathbb{R} \times \mathbb{R}} |x - y| \de \pi(x,y)\leq 2\mathcal{W}_1(\mu,\nu)$ (including the particularly notable inverse transform martingale coupling).

%\begin{itemize}
%\item Nicolas Juillet for the left-curtain \cite{Ju14} (two proofs: martingale-monotone sets and definition of left-curtain)
%\item Guo Obloj?
%\item de March?
%\item Alfonsi Corbetta Jourdain?
%\item Johannes Wiesel \cite{Wi20} (rearrangement)
%\item Backhoff-Verguas Beiglböck Pammer \cite{BaBePa19} Backhoff-Verguas Pammer \cite{BaPa19} (martingale monotonicity)
%\end{itemize}

\subsection*{Instability in d $\boldsymbol{\geq 2}$}

The stability of \eqref{eq:OT} (and its extension to weak optimal transport \cite[Theorem 2.5]{BeJoMaPa20b}) is independent of the dimension. However, Beiglböck {\it et al.} had to restrict their stability theorem for (weak) \eqref{eq:MOT} in the critical step to dimension $d = 1$ (cf.\ \cite[Theorem 2.6 (b')]{BeJoMaPa20b}). Similarly, in dimension $d \geq 2$, Jourdain and Margheriti could only extend the martingale Wasserstein inequality  for product measures and for measures in relation through a homothetic transformation, see \cite[Section 3]{JoMa20}.  The difficulties in expanding these stability results to higher dimensions are not of technical nature but a consequence of instability of \eqref{eq:MOT} in higher dimensions without further assumptions.

In the following we construct a sequence of probability measures on $\mathbb{R}^2$ for which (S1), (S2) and (S3) do not hold. Moreover, we provide an example that shows that the inequality \eqref{eq:StabilityEq} does not hold in dimension $d = 2$ for any fixed constant $C > 0$ without further assumptions. Since one can  embed this example into $\mathbb{R}^d$ for any $d \geq 3$ by the map $\iota : (x,y) \mapsto (x,y,0,...,0)$, these results also fail in any higher dimension.

We denote by $P_\theta$ the one step probability kernel of the simple random walk along the line $l_\theta$ that makes an angle $\theta \in \left[0, \frac{\pi}{2}\right]$ with the $x$-axis. More precisely:
\begin{equation*}
	P_\theta :  \R^2  \ni (x_1,x_2)\mapsto \frac{1}{2} \left( \delta _{(x_1,x_2) + (\cos(\theta),\sin(\theta))} + \delta _{(x_1,x_2) - (\cos(\theta),\sin(\theta))}\right)\in \mathcal{P}_1(\R^2).
\end{equation*}
For $m,n \in \mathbb{N}_{\geq 1}$ we define  two probability measures on $\mathbb{R}^2$ by 
\begin{align*}
	 \mu _m := \sum _{i = 1} ^{m} \frac{1}{m} \delta_{(i,0)} \quad \text{and} \quad
	 \nu _{m,n} := \mu_m P_{\frac{\pi}{2n}}
\end{align*}
where $\mu_m P_{\frac{\pi}{2n}}$ denotes the application of the kernel $P_{\frac{\pi}{2n}}$ to $\mu _m$. Figure \ref{fig:Sketch} illustrates  $(\mu_2,\nu_{2,2})$ and $(\mu_3, \nu _{3,3})$.

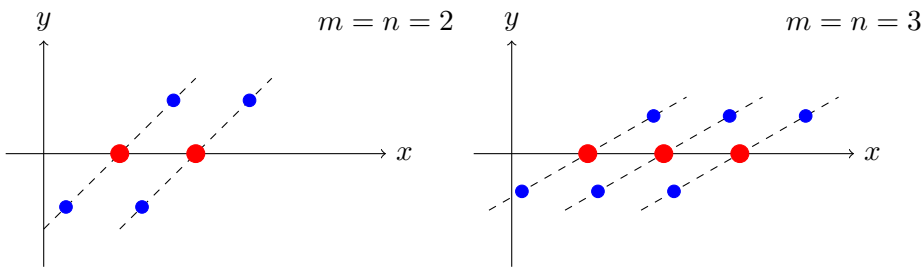
\begin{figure} \label{fig:Sketch}
	\begin{center}
		
		\begin{tikzpicture}
		\draw[->] (0,-1.5) -- (0,1.5);
		\node[above] at (0,1.5) {$y$};
		\draw[->] (-0.5,0) -- (4.5,0);
		\node[right] at (4.5,0) {$x$};
		\node[above] at (4.5,1.5) {$m=n=2$};

		\filldraw[red] (1,0) circle (0.5ex);
		\filldraw (2,0) circle (0.5ex);
		
		\draw[dashed] (0,-1) -- (2,1);
		\draw[dashed] (1,-1) -- (3,1);
		
		\filldraw[red] (1,0) circle (.71ex);
		\filldraw[red] (2,0) circle (0.71ex);
		\filldraw[blue] (1-0.707,-0.707)	 circle (0.5ex);
		\filldraw[blue] (1-0.707+1,-0.707) circle (0.5ex);
		\filldraw[blue] (1+0.707,0.707) circle (0.5ex);
		\filldraw[blue] (1+0.707+1,0.707) circle (0.5ex);
		
		\end{tikzpicture}
		\begin{tikzpicture}
		\draw[->] (0,-1.5) -- (0,1.5);
		\node[above] at (0,1.5) {$y$};
		\draw[->] (-0.5,0) -- (4.5,0);
		\node[right] at (4.5,0) {$x$};
		\node[above] at (4.5,1.5) {$m=n=3$};
		
		\draw[dashed] (1-1.299,-0.75) -- (1+1.299,0.75);
		\draw[dashed] (2-1.299,-0.75) -- (2+1.299,0.75);
		\draw[dashed] (3-1.299,-0.75) -- (3+1.299,0.75);

		\filldraw[red] (1,0) circle (0.71ex);
		\filldraw[red] (2,0) circle (0.71ex);
		\filldraw[red] (3,0) circle (0.71ex);
		
		\filldraw[blue] (1-0.866,-0.5)	 circle (0.5ex);
		\filldraw[blue] (2-0.866,-0.5) circle (0.5ex);
		\filldraw[blue] (3-0.866,-0.5) circle (0.5ex);
		
		\filldraw[blue] (1+0.866,0.5)	 circle (0.5ex);
		\filldraw[blue] (2+0.866,0.5) circle (0.5ex);
		\filldraw[blue] (3+0.866,0.5) circle (0.5ex);
			
		\end{tikzpicture}

		\caption{The construction for $m=n =2$ and $m=n = 3$. The red circles indicate the Dirac measures of $\mu_m$ each with mass $\frac{1}{m}$ and the blue circles indicate the Dirac measures of $\nu_{m,n}$ each with mass $\frac{1}{2m}$.}
	\end{center}
\end{figure}

Since for any convex function $\varphi : \mathbb{R}^2 \rightarrow \mathbb{R}$ Jensen's inequality yields
\begin{equation*}
	\int_{\mathbb{R}^2} \varphi \de \nu_{m,n} = \int_{\mathbb{R}^2} \left( \int_{\mathbb{R}^2} \varphi \de P_{\frac{\pi}{2n}} (x, \cdot) \right) \de \mu_m (x) \geq \int_{\mathbb{R}^2} \varphi \de \mu_{m}, 
\end{equation*}
we have $\mu_m \leqc \nu_{m,n}$ for all $m,n \in \mathbb{N}_{\geq 1}$. 

\begin{lemma} \label{lemma:MartCpl}
	The martingale coupling $\pi_{m,n} := \mu_m(\Id, P_{\frac{\pi}{2n}})$ is the only martingale coupling between $\mu_m$ and $\nu_{m,n}$ for all $m,n \in \mathbb{N}_{\geq 1}$.
%	Thus,  there holds $\mathcal{M}_1(\mu_m, \nu_{m,n}) = 1$.
\end{lemma}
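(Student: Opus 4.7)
The plan is to fix any $\pi\in\Pi_M(\mu_m,\nu_{m,n})$, disintegrate it along its first marginal, and use the martingale constraint coordinate by coordinate to squeeze the conditional distributions into those of $\pi_{m,n}$. Set $\theta:=\pi/(2n)\in(0,\pi/2]$. The support of $\nu_{m,n}$ consists of the $2m$ distinct points $(k+\cos\theta,\sin\theta)$ and $(k-\cos\theta,-\sin\theta)$ for $k=1,\dots,m$, lying on the two horizontal lines $y=\pm\sin\theta$. For each $i\in\{1,\dots,m\}$ I would introduce
\[
a_{i,k}:=\pi_{(i,0)}\bigl(\{(k+\cos\theta,\sin\theta)\}\bigr), \qquad b_{i,k}:=\pi_{(i,0)}\bigl(\{(k-\cos\theta,-\sin\theta)\}\bigr),
\]
where $\pi_{(i,0)}$ denotes the conditional at $(i,0)$; the goal becomes $a_{i,k}=b_{i,k}=\tfrac12\delta_{i,k}$.

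The first step exploits the $y$-component of the barycenter identity $\int y\,\mathrm d\pi_{(i,0)}=0$. Since $\sin\theta\neq 0$, the equation $\sin\theta\sum_k a_{i,k}-\sin\theta\sum_k b_{i,k}=0$, combined with $\sum_k(a_{i,k}+b_{i,k})=1$, immediately forces $\sum_k a_{i,k}=\sum_k b_{i,k}=\tfrac12$ for every $i$. Substituting into the $x$-component of the barycenter identity, the $\cos\theta$ contributions cancel and one is left with $\sum_{k=1}^m k\,(a_{i,k}+b_{i,k})=i$.

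Setting $c_{i,k}:=a_{i,k}+b_{i,k}$, the two marginal conditions on $\pi$ make $(c_{i,k})_{i,k=1}^m$ a doubly stochastic matrix, while the identity above says that the mean of its $i$-th row equals $i$. I would then deduce $c=\Id$ by a short induction on $i$: for $i=1$, the equality $\sum_k(k-1)c_{1,k}=0$ with $k-1\ge 0$ and $c_{1,k}\ge 0$ forces $c_{1,1}=1$ and $c_{1,k}=0$ for $k\ge 2$; the column-sum constraint then kills $c_{i,1}$ for $i\ge 2$, and applying the same argument to the reduced second row gives $c_{2,2}=1$, and so on. Notably no Birkhoff decomposition is needed.

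Once $a_{i,k}+b_{i,k}=\delta_{i,k}$, non-negativity of $a$ and $b$ kills both off the diagonal, and the row-sum $\sum_k a_{i,k}=\tfrac12$ pins down $a_{i,i}=b_{i,i}=\tfrac12$, which is exactly the structure of $\pi_{m,n}$. The only step that requires a little care is the inductive reduction to $c=\Id$, but it is elementary as sketched, so I do not anticipate a serious obstacle.
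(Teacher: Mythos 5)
Your argument is correct, and it reaches the conclusion by a genuinely more elementary route than the paper. The paper's proof applies the linear projection $L_{\theta}(x_1,x_2)=x_1-\tan(\theta)^{-1}x_2$ (the projection parallel to $l_\theta$ onto the $x$-axis), observes that it maps both marginals to $\tfrac1m\sum_i\delta_i$, and then invokes Jensen's inequality with the strict convexity of $x\mapsto x^2$ to conclude that the projected coupling is the identity coupling, so that $\pi$ can only transport along lines parallel to $l_\theta$; the two-point structure on each such line then pins down $\pi$. Your computation is a discretized version of the same skeleton: your quantity $c_{i,k}=a_{i,k}+b_{i,k}$ is exactly the projected coupling (note $L_\theta(k\pm\cos\theta,\pm\sin\theta)=k$), and your identity $\sum_k k\,c_{i,k}=i$ is its martingale property, which you derive by hand from the two components of the barycenter condition rather than by pushing forward under $L_\theta$. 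Where the paper uses second moments and strict convexity to show that a martingale self-coupling of $\tilde\mu$ is the identity, you prove the same fact for this finite measure by the doubly-stochastic-matrix induction $\sum_k(k-i)c_{i,k}=0$ with the already-cleared columns removed. What your version buys is the avoidance of disintegration subtleties and of the Jensen equality case, plus it covers $n=1$ (where $\theta=\pi/2$ and the paper's formula for $L_{\theta}$ degenerates) without a separate remark; what the paper's version buys is that the projection-plus-strict-convexity mechanism transfers verbatim to non-discrete variants such as the parallelogram construction of Remark \ref{rem:Variants}, where an atom-by-atom bookkeeping is unavailable.
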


%\begin{lemma} \label{lemma:W1Dist}
%	There holds $\lim _{n \rightarrow \infty} \mathcal{W}_1(\nu_{3,n}, \mu_3P_0) = 0$ and  $\lim _{n \rightarrow \infty} \mathcal{W}_1(\mu_n,\nu_{n,n}) = 0$. 
%\end{lemma}

The sequence $(\mu_3, \nu_{3,n})_{n \in \mathbb{N}}$ serves as a counterexample to analogue versions of (S1), (S2) and (S3) in dimension $d = 2$.  The crucial observation is that whereas $\Pi_M(\mu_3, \nu_{3,n})$ consists of exactly one element for all $n \in \mathbb{N}$ by Lemma \ref{lemma:MartCpl}, there are infinitely many different martingale couplings between $\mu_3$ and the limit of $(\nu_{3,n})_{n \in \mathbb{N}}$.

\begin{proposition} \label{prop:NoStability}
	There holds $\lim _{n \rightarrow \infty} \mathcal{W}_1(\nu_{3,n},\mu_3P_0) = 0$.
	
	Moreover, we have the following:
	\begin{itemize}
		\item[(i)]  Let $c_1(x,y) := \Vert y-x\Vert$ for all $x,y \in \mathbb{R}^2$ and $\pi_n := \mu_3(\Id,P_{\frac{\pi}{2n}})$ for all $n \in \mathbb{N}_{\geq 1}$. The martingale couplings $\pi_n$ are minimizers of the \eqref{eq:MOT} problem between $\mu_3$ and $\nu_{3,n}$ w.r.t.\ $c_1$. Moreover, $(\pi_n)_{n \in \mathbb{N}}$ is weakly convergent but the limit is not an optimizer of \eqref{eq:MOT} w.r.t.\ $c_1$ between (its marginals) $\mu_3$ and $\mu_3P_0$.
		\item[(ii)] Let $c_1$ be defined as in (i). There holds 	
		\begin{equation*}
			\lim _{n \rightarrow \infty} V_{c_1}^M(\mu_3, \nu_{3,n}) = 1 > V_{c_1}^M (\mu_3, \mu_3P_0).
		\end{equation*}
		\item[(iii)] The set $\Pi_M(\mu_3,\mu_3P_0) \setminus \{\mu_3(\Id,P_0)\} $ is non empty and no element in this set can be approximated by a weakly convergent sequence $(\pi_n)_{n \in \mathbb{N}}$ of martingale couplings $\pi _n \in \Pi_M(\mu_3, \nu_{3,n})$.
	\end{itemize}
\end{proposition}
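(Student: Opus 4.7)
\noindent\emph{Proof plan.} The four assertions are best treated in sequence, leaning throughout on Lemma~\ref{lemma:MartCpl}. Writing $\alpha_n:=\pi/(2n)$, the measure $\mu_3P_0$ is supported on $\{0,1,2,3,4\}\times\{0\}$ with masses $\tfrac{1}{6},\tfrac{1}{6},\tfrac{1}{3},\tfrac{1}{6},\tfrac{1}{6}$, while $\nu_{3,n}$ is supported on the six atoms $(i,0)\pm(\cos\alpha_n,\sin\alpha_n)$, $i=1,2,3$, each of mass $\tfrac{1}{6}$. Since each of these atoms lies at distance $2\sin(\alpha_n/2)$ from its limit, pairing them accordingly yields $\mathcal{W}_1(\nu_{3,n},\mu_3P_0)\leq 2\sin(\alpha_n/2)\to 0$.

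Next, by Lemma~\ref{lemma:MartCpl}, $\pi_n=\mu_3(\Id,P_{\alpha_n})$ is the only martingale coupling between $\mu_3$ and $\nu_{3,n}$, so it is a minimizer for every cost; since every atom moves by exactly one unit, $V^M_{c_1}(\mu_3,\nu_{3,n})=1$. Continuity of the construction in $n$ yields that $\pi_n$ converges weakly to $\pi_\infty:=\mu_3(\Id,P_0)$, which also has $c_1$-cost equal to $1$.

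The crux is to exhibit an element $\tilde\pi\in\Pi_M(\mu_3,\mu_3P_0)$ strictly cheaper than $\pi_\infty$. The idea is to freeze the central source $(2,0)$ and rebalance the two outer ones. Concretely, I take $\tilde\pi$ to be the martingale coupling with disintegration
\begin{align*}
	\tilde\pi_{(1,0)} &= \tfrac{1}{2}\delta_{(0,0)}+\tfrac{1}{4}\delta_{(1,0)}+\tfrac{1}{4}\delta_{(3,0)},\\
	\tilde\pi_{(2,0)} &= \delta_{(2,0)},\\
	\tilde\pi_{(3,0)} &= \tfrac{1}{4}\delta_{(1,0)}+\tfrac{1}{4}\delta_{(3,0)}+\tfrac{1}{2}\delta_{(4,0)}.
\end{align*}
Direct verification shows that the barycentres equal $1,2,3$, that the push-forward marginal is $\mu_3P_0$, and that the total $c_1$-cost equals $\tfrac{2}{3}$.

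Assembling the conclusions: for (i), the weak limit $\pi_\infty$ has cost $1$ while $V^M_{c_1}(\mu_3,\mu_3P_0)\leq\tfrac{2}{3}$, so $\pi_\infty$ is not optimal; for (ii), $V^M_{c_1}(\mu_3,\nu_{3,n})=1$ for every $n$, so the limit is $1>V^M_{c_1}(\mu_3,\mu_3P_0)$; for (iii), Lemma~\ref{lemma:MartCpl} forces any $\sigma_n\in\Pi_M(\mu_3,\nu_{3,n})$ to coincide with $\pi_n$, hence $\pi_\infty$ is the only possible weak accumulation point, and $\tilde\pi\neq\pi_\infty$ witnesses both the non-emptiness of $\Pi_M(\mu_3,\mu_3P_0)\setminus\{\pi_\infty\}$ and the non-approximability claim. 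The only non-routine step is producing the explicit $\tilde\pi$; once this is on the table the remainder is mechanical.
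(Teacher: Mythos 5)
Your proof is correct and follows essentially the same strategy as the paper: uniqueness from Lemma~\ref{lemma:MartCpl}, the chord estimate $2\sin(\alpha_n/2)$ for the Wasserstein convergence, and an explicit cheaper martingale coupling between $\mu_3$ and $\mu_3P_0$ to defeat optimality of the weak limit. Your competitor $\tilde\pi$ (cost $\tfrac{2}{3}$) differs from the paper's $\pi'$ (cost $\tfrac{1}{2}$, which the paper additionally identifies as the actual minimizer via Lim's theorem), but since the proposition only requires the strict inequality $1>V^M_{c_1}(\mu_3,\mu_3P_0)$, your coupling fully suffices.
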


The sequence $(\mu_n, \nu_{n,n})_{n \in \mathbb{N}}$ shows that there cannot exist a constant $C >0$ for which the inequality \eqref{eq:StabilityEq} holds in dimension $d = 2$. 

\begin{proposition} \label{prop:NoConst}
	There holds 
	\begin{equation*}
	\lim _{n \rightarrow \infty} \frac{\mathcal{M}_1(\mu_n,\nu_{n,n})}{\mathcal{W}_1(\mu_n,\nu_{n,n})} = + \infty.
	\end{equation*}
\end{proposition}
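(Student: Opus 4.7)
The plan is to show that the numerator $\mathcal{M}_1(\mu_n,\nu_{n,n})$ is identically equal to $1$, whereas the denominator $\mathcal{W}_1(\mu_n,\nu_{n,n})$ decays like $1/n$, so the ratio diverges. First I would invoke Lemma~\ref{lemma:MartCpl} with $m=n$: it gives $\Pi_M(\mu_n,\nu_{n,n}) = \{\pi_{n,n}\}$ where $\pi_{n,n} = \mu_n(\Id,P_{\pi/(2n)})$. Since $P_{\pi/(2n)}$ displaces every point by a vector of unit norm, $\int \|y-x\|\,\de\pi_{n,n}(x,y) = 1$ for every $n$, so $\mathcal{M}_1(\mu_n,\nu_{n,n}) = 1$.

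The bulk of the work is then to bound $\mathcal{W}_1(\mu_n,\nu_{n,n})$ from above by constructing an explicit non-martingale coupling $\gamma_n \in \Pi(\mu_n,\nu_{n,n})$. Writing $\theta_n := \pi/(2n)$ and $y_i^\pm := (i\pm\cos\theta_n,\pm\sin\theta_n)$ for the $2n$ atoms of $\nu_{n,n}$, the geometric fact I will exploit is that $y_i^+$ is close to $(i+1,0)$ and $y_i^-$ is close to $(i-1,0)$; a direct calculation gives $\|y_i^+ - (i+1,0)\| = \|y_i^- - (i-1,0)\| = 2\sin(\theta_n/2) \leq \pi/(4n)$. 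For $i=1,\dots,n-1$ I will match mass $\tfrac{1}{2n}$ from $(i+1,0)$ to $y_i^+$ and mass $\tfrac{1}{2n}$ from $(i,0)$ to $y_{i+1}^-$, each of these edges having length $2\sin(\theta_n/2)$. To balance the remaining marginals at the two endpoints, the leftover $\tfrac{1}{2n}$ at $(1,0)$ is sent to $y_1^-$ and the leftover $\tfrac{1}{2n}$ at $(n,0)$ is sent to $y_n^+$; both of these boundary edges have length exactly $1$.

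Summing the contributions will yield
\[
\mathcal{W}_1(\mu_n,\nu_{n,n}) \leq \frac{2(n-1)}{n}\sin\!\left(\frac{\pi}{4n}\right) + \frac{1}{n} \leq \frac{\pi+2}{2n},
\]
so that $\mathcal{M}_1(\mu_n,\nu_{n,n})/\mathcal{W}_1(\mu_n,\nu_{n,n}) \geq 2n/(\pi+2) \to \infty$, which is the claim. The only step requiring genuine care will be checking that $\gamma_n$ really has the correct marginals at the endpoints $i=1$ and $i=n$, but this is a straightforward verification once the picture analogous to Figure~\ref{fig:Sketch} is drawn; no deeper obstacle is expected.
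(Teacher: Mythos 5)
Your proposal is correct and follows essentially the same route as the paper: the numerator equals $1$ by the uniqueness of the martingale coupling (Lemma~\ref{lemma:MartCpl}), and the denominator is bounded by $\frac{\pi+2}{2n}$ — the paper obtains this same bound via the triangle inequality through the intermediate measure $\mu_nP_0$ (using Lemma~\ref{lemma:W1Dist}), whereas you glue the two steps into one explicit coupling, which checks out. One trivial slip: $2\sin(\theta_n/2)=2\sin(\pi/(4n))\leq \pi/(2n)$, not $\pi/(4n)$, but your summed estimate uses the correct expression, so the final bound is unaffected.
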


\begin{remark} 
The theory of \ref{eq:MOT} in dimension two and further is also challenging in other aspects. For instance, the concept of irreducible components and convex paving can be directly reduced to the study  of potential functions in dimension $d = 1$, whereas there are at least three different advanced approaches in dimension $d \geq 2$  (cf.\ \cite{GhKiLi19,MaTo19,ObSi17}).
On the level of processes we would like to remind the reader that a higher dimensional version of Kellerer's theorem is still not proved or disproved. One major obstacle is that the one-dimensional proof via Lipschitz-Markov kernels cannot be extrapolated, see \cite[Section 2.2]{Ju16} where a method similar to ours is used.
\end{remark}

\section{Proofs} \label{sec:proofs}

We denote by $f_{\#} \mu$ the push-forward of the measure $\mu$ under the function $f$.

\begin{proof}[Proof of Lemma \ref{lemma:MartCpl}]
	Let $m,n\geq 2$ be integers and $\theta_n := \frac{\pi}{2n}$. We denote by $L_{\theta_n}$ the projection parallel to the line $l_{\theta_n} = \{(x_1, \tan(\theta_n)x_1) : x_1 \in \mathbb{R}\}$ onto the $x$-axis, i.e.\
	\begin{equation*}
		L_{\theta_n} : \mathbb{R}^2 \ni (x_1,x_2) \mapsto x_1 - \tan(\theta_n)^{-1}x_2\in\mathbb{R}.
	\end{equation*}
	Moreover, by setting $\tilde{\nu} := (L_{\theta_n})_{\#} \nu_{m,n}$ and $\tilde{\mu} := (L_{\theta_n})_{\#} \mu_{m}$ one has
	\begin{equation}  \label{eq:SameMarg}
		\tilde{\nu} = \frac{1}{m} \sum_{i = 1} ^m \delta_{i} = \tilde{\mu}.
	\end{equation}
	
	Let $\pi \in \Pi_M(\mu_{m},\nu_{m,n})$. As $L_{\theta_n}$ is a linear map, $\tilde{\pi} := (L_{\theta_n}\otimes L_{\theta_n})_{\#} \pi$ is a martingale coupling of $\tilde{\mu}$ and $\tilde{\nu}$. Indeed, for all $\varphi \in C_b(\mathbb{R}^2)$ there holds
	\begin{equation*}
		\int _{\mathbb{R}^2} \varphi(x)(y-x) \de \tilde \pi(x) = L_{\theta_n} \left(\int _{\mathbb{R}^2} \varphi(L_{\theta_n}(x))(y-x) \de \pi(x) \right) = 0
	\end{equation*}
	and this property is equivalent to $\tilde{\pi}$ being a martingale coupling.
	Jensen's inequality in conjunction with \eqref{eq:SameMarg} yields 
	\begin{equation*}
		\int _{\mathbb{R}} x^2 \de \tilde{\mu}(x) \leq \int _{\mathbb{R}} \left( \int _{\mathbb{R}} y^2 \de \tilde{\pi}_x(y) \right) \de \tilde{\mu}(x) =  \int _{\mathbb{R}} y^2 \de \tilde{\nu}(y) = \int _{\mathbb{R}} x^2 \de \tilde{\mu}(x)
	\end{equation*}
	where $(\tilde{\pi}_x)_{x \in \mathbb{R}}$ is a disintegration of $\tilde{\pi}$ that satisfies \eqref{eq:MartDis}. Since the square is a strictly convex function, there holds  $\int _{\mathbb{R}} y^2 \de \tilde{\pi}_x=x^2$ if and only if $\tilde{\pi}_x=\delta_x$. Thus, $\tilde{\pi} = \tilde{\mu}(\Id,\Id)$ and we obtain
	\begin{equation*}
		x_1 = L_{\theta_n}(x_1,x_2) = L_{\theta_n}(y_1,y_2) \quad \text{for } \pi \text{-a.e. } ((x_1,x_2),(y_1,y_2)) \in \mathbb{R}^2 \times \mathbb{R}^2
	\end{equation*}
	because $\mathrm{supp}(\mu_m) \subset \mathbb{R} \times \{0\}$. Hence, the martingale transport plan $\pi$ is only transporting along the lines parallel to $l_{\theta_n}$. Since there are exactly two points in the support of $\nu_{m,n}$ that lie on the same line, and we are looking for a martingale coupling, we have
	\begin{equation*}
		\pi = \mu_m(\Id, P_{\theta_n}).\qedhere
	\end{equation*}
\end{proof}

\begin{lemma} \label{lemma:W1Dist}
	For all $m \in \mathbb{N}\setminus\{0\}$ and $\theta \in \left[0, \frac{\pi}{2} \right]$ one has
	\begin{equation*}
		\mathcal{W}_1(\mu_m P_0, \mu_m P_{\theta}) \leq \theta.
	\end{equation*}
\end{lemma}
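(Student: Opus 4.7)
The plan is to bound $\mathcal{W}_1$ from above by exhibiting an explicit transport plan between the two discrete measures, obtained by rotating each step of the simple random walk. For each $i \in \{1,\dots,m\}$, the measure $\mu_m P_0$ puts mass $\tfrac{1}{2m}$ on each of the two points $(i\pm 1, 0)$, while $\mu_m P_\theta$ puts mass $\tfrac{1}{2m}$ on each of the two points $(i,0) \pm (\cos\theta,\sin\theta)$. The natural candidate coupling sends the $+$-arrow to the $+$-arrow and the $-$-arrow to the $-$-arrow, that is,
\begin{equation*}
    \pi := \frac{1}{2m}\sum_{i=1}^{m}\Bigl(\delta_{((i+1,0),\,(i+\cos\theta,\sin\theta))} + \delta_{((i-1,0),\,(i-\cos\theta,-\sin\theta))}\Bigr).
\end{equation*}

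First I would check that $\pi \in \Pi(\mu_m P_0, \mu_m P_\theta)$; this is immediate by reading off the two marginals from the definition (being careful that, for $m\geq 2$, the endpoints of different arrows may coincide in $\mu_m P_0$, which is fine since $\pi$ is indexed by $(i,\pm)$ and only its pushforwards matter). Then I would compute the transport cost: each of the $2m$ atoms of $\pi$ is supported on a pair at Euclidean distance
\begin{equation*}
    \sqrt{(1-\cos\theta)^2 + \sin^2\theta} \;=\; \sqrt{2-2\cos\theta} \;=\; 2\sin(\theta/2),
\end{equation*}
so $\int \Vert x-y\Vert\, \de\pi(x,y) = 2\sin(\theta/2)$. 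Finally, using $\sin(t) \leq t$ for $t \geq 0$ gives $2\sin(\theta/2) \leq \theta$, and the primal definition of $\mathcal{W}_1$ yields the claimed bound.

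There is no real obstacle; the only thing to be mindful of is to keep the coupling indexed by the base point $i$ and the sign $\pm$ rather than by the endpoints, so that the argument goes through uniformly in $m$ regardless of possible overlaps among the endpoints. Since a better constant ($2\sin(\theta/2)$) actually drops out, one could state the sharper inequality as a remark, but $\theta$ is all that is needed downstream.
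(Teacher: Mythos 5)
Your proof is correct and is essentially the paper's argument made explicit: the paper bounds $\mathcal{W}_1(\mu_m P_0,\mu_m P_\theta)$ by $\int \mathcal{W}_1(\delta_x P_0,\delta_x P_\theta)\,\de\mu_m(x)$ and computes the same chord length $2\sin(\theta/2)\leq\theta$, which is exactly the cost of the $+$-to-$+$, $-$-to-$-$ coupling you wrote down. Nothing further is needed.
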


\begin{proof} The inequality consists merely of a comparison of angle and chord. Alternatively, for all $m \in \mathbb{N}$ and $\theta \in \left[0, \frac{\pi}{2} \right]$ we directly compute
	\begin{align*}
		\mathcal{W}_1(\mu_m P_0, \mu_m P_{\theta}) &\leq \int _{\mathbb{R}^2} \mathcal{W}_1(\delta_x P_0, \delta _x P_{\theta}) \de \mu_m(x) \\
		&= \left\vert \left(\sin (\theta), \cos (\theta) -1 \right) \right\vert = \sqrt{2(1 - \cos(\theta))}=2\sin(\theta/2) \leq \theta.\qedhere
	\end{align*}
\end{proof}

\begin{proof}[Proof of Proposition \ref{prop:NoStability}]
	By Lemma \ref{lemma:W1Dist}, we know
	\begin{equation*}
		\lim _{n \rightarrow \infty}\mathcal{W}_1(\nu_{3,n},\mu_3P_0) = \lim _{n \rightarrow \infty}\mathcal{W}_1(\mu_3 P_{\frac{\pi}{2n}},\mu_3P_0) = 0.
	\end{equation*}
	Moreover, for all $n \in \mathbb{N}$ Lemma \ref{lemma:MartCpl} yields that $\pi_n := \mu_3(\Id, P_{\frac{\pi}{2n}})$ is the only martingale coupling between $\mu_3$ and $\nu_{3,n}$ and therefore automatically the unique minimizer of the \eqref{eq:MOT} problem between these two marginals w.r.t.\ to any cost function. The sequence $(\pi_n)_{n \in \mathbb{N}}$ converges weakly to $\pi := \mu_3(\Id,P_0) \in \Pi_M(\mu_3,\mu_3P_0)$. Note that 
	\begin{align*}
		\pi' :=& \frac{1}{6} \left( \delta_{((1,0),(1,0))} + 2 \delta_{((2,0),(2,0))} + \delta_{((3,0),(3,0))}\right) \\ &+ \frac{1}{24} \left(3\delta_{((1,0),(0,0))} +  \delta_{((1,0),(4,0))} + \delta_{((3,0),(0,0))} + 3 \delta_{((3,0),(4,0))} \right)
	\end{align*}
	is a martingale coupling between $\mu_3$ and $\mu_{3}P_0=\mu_3-\frac13\left(\frac{\delta_1+\delta_3}2-\frac{\delta_0+\delta_4}2\right)$ different from $\pi$ where only the mass not shared by $\mu_3$ and $\mu_{3}P_0$ is moved.
	
	Item (i): Since $\pi$ is the weak limit of the sequence $(\pi_n)_{n \in \mathbb{N}}$, it is the only accumulation point. But as we see below in (ii), $\pi$ is not the minimizer of the \eqref{eq:MOT} problem between $\mu_3$ and $\mu_3P_0$ w.r.t.\ $c_1$.
	
	Item (ii): There holds
	\begin{align*}
		\lim _{n \rightarrow \infty} V^M_{c_1}(\mu_3, \nu_{3,n}) &=\lim _{n \rightarrow \infty} \int_{\mathbb{R}^2 \times \mathbb{R}^2} \Vert x-y\Vert \de \pi_n = 1 \\ &> \frac{1}{2} = \int_{\mathbb{R}^2 \times \mathbb{R}^2} \Vert x-y\Vert \de \pi' \geq V_{c_1}^M(\mu_3,\mu_3P_0).
	\end{align*}	
	In fact, according to Lim's result \cite[Theorem 2.4]{Li20},  under an optimal martingale transport w.r.t.\ $c_1$  the shared mass between the marginal distribution is not moving. Since $\pi'$ is the unique martingale transport  between $\mu_3$ and $\mu_3P_0$ with this property, it is the minimizer of this \eqref{eq:MOT} problem and $V_{c_1}^M(\mu_3,\mu_3P_0) = \frac{1}{2}$.

	Item (iii): Since $\pi$ is the weak limit of the solitary elements of $\Pi_M(\mu_3, \nu_{3,n})$, no element of $\Pi_M(\mu_3,\mu_3P_0) \setminus \{\mu_3(\Id,P_0)\} $ can be approximated and $\pi'$ is an element of this set.
\end{proof}

\begin{proof}[Proof of Proposition \ref{prop:NoConst}]
	Let $n \in \mathbb{N}$.
	By Lemma \ref{lemma:MartCpl}, $\mu_n(\Id,P_{\frac{\pi}{2n}})$ is the only martingale coupling between $\mu_n$ and $\nu_{n,n}$. Thus,
	\begin{equation*}
		\mathcal{M}_1(\mu_n,\nu_{n,n}) = 1.
	\end{equation*}
	Since $\mathcal{W}_1$ is a metric on $\mathcal{P}_1(\mathbb{R}^2)$, the triangle inequality yields
	\begin{align*}
		\mathcal{W}_1(\mu_n, \nu_{n,n}) \leq \mathcal{W}_1(\mu_n, \mu_n P^0) + \mathcal{W}_1(\mu_n P^0, \nu _{n,n}).
	\end{align*}
	We can easily compute
	\begin{equation*}
		\mu_n P^0 = \frac{1}{2n} \left( \sum _{i = 1} ^n \delta_{(i-1,0)} + \sum _{i = 1} ^n \delta_{(i+1,0)} \right)
	\end{equation*}
	and therefore $\mathcal{W}_1(\mu_n, \mu_n P^0) = \frac{1}{n}$. By Lemma \ref{lemma:W1Dist}, there holds $\mathcal{W}_1(\mu_n P^0, \nu _{n,n}) \leq \frac{\pi}{2n}$.
	Hence, we obtain
	\begin{equation*}
		\lim_{n \rightarrow \infty} \frac{\mathcal{M}_1(\mu_n,\nu_{n,n})}{\mathcal{W}_1(\mu_n,\nu_{n,n})} \geq \lim_{n \rightarrow \infty} \frac{1}{\frac{1}{n} + \frac{\pi}{2n}} = + \infty.
	\end{equation*}
\end{proof}

\begin{remark}[Variations] \label{rem:Variants}
Our construction may appear somewhat degenerate since $\mu_m$ is  discrete and supported on a lower dimensional subspace of $\mathbb{R}^2$. However, it is not particularly difficult to adapt the present construction with new measures that appear more general but yield the same result:
\begin{itemize}
\item[(ii)] One could replace the rows of Dirac measures by uniform measures on parallelograms. More precisely, we could set 
\begin{equation*}
\tilde{\mu}_{m,n}:= \mathrm{Unif}_{F_{m,n}} \quad \text{and} \quad \tilde{\nu} _{m,n} := \frac{1}{2} \left( \mathrm{Unif}_{F_{m,n}^+} + \mathrm{Unif}_{F_{m,n}^-} \right)
\end{equation*}
where  $F_m$ denotes the parallelogram spanned by the points 
\begin{equation*}
	-v_n , \quad -v_n + (m,0) , \quad v_n + (m,0) \quad \text{and} \quad v_n 
\end{equation*}
with $v_n:= \frac{1}{3}\left(\cos\left(\frac{\pi}{2n}\right), \sin\left(\frac{\pi}{2n}\right) \right) \in \mathbb{R}^2$ 
and $F_{m,n}^{\pm}$ is the translation of this parallelogram by $\pm 3v_n$ (cf.\ Figure \ref{fig:Sketch2}). By the same argument as in Lemma \ref{lemma:MartCpl}, any martingale coupling between $\tilde{\mu}_{m,n}$ and $\tilde{\nu}_{m,n}$ can only transport along lines parallel to $\{(x, \tan\left(\frac{\pi}{2n}\right)x) : x \in \mathbb{R}\}$. In contrast to the situation in Lemma \ref{lemma:MartCpl}, the martingale transport along one of these parallel lines is no longer unique but 
every $\pi \in \Pi_{M}(\tilde{\mu}_{m,n}, \tilde{\nu}_{m,n})$ satisfies $\pi\left(|x-y| < \frac{1}{3}\right) = 0$  for all $m,n \in \mathbb{N}$ because the supports are disjoint. This restriction carries over to any weak accumulation point of those martingale couplings and is sufficient to show analogous versions of Proposition \ref{prop:NoStability} and Proposition \ref{prop:NoConst}.
\item [(iii)] One could replace $\mu_m$ and $\nu_{m,n}$ by 
\begin{equation*}
	\tilde{\mu}_m := (1-\epsilon)\mu_m+\epsilon\gamma \quad \text{and} \quad \tilde{\nu} _{m,n} := (1-\epsilon)\nu_{m,n}+\epsilon\gamma
\end{equation*}
where $\varepsilon \in (0,1)$ and $\gamma$ is a probability measure with full support (e.g.\ a standard normal distribution). There holds $\mathcal{W}_1(\tilde{\mu}_m,\tilde{\nu}_{m,n}) = (1-\epsilon)\mathcal{W}_1(\mu_m,\nu_{m,n})$ since $\mathcal{W}_1$ derives of the Kantorovich-Rubinstein norm \cite{KaRu58} (alternatively see   \cite[Bib.\ Notes of Ch.6 ]{Vi09} or \cite[\S *11.8]{Du02}) and $\mathcal{M}_1(\tilde{\mu}_m,\tilde{\nu}_{m,n}) = (1-\epsilon)\mathcal{M}_1(\mu_m,\nu_{m,n})$ by a result of Lim  \cite[Theorem 2.4]{Li20}.
\end{itemize}  
\end{remark}
\begin{remark}
Finally, we would like to point out that Propositions \ref{prop:NoStability} and \ref{prop:NoConst} and their proofs are not depending on the choice of the Euclidean norm while defining $c_1$.
\end{remark}

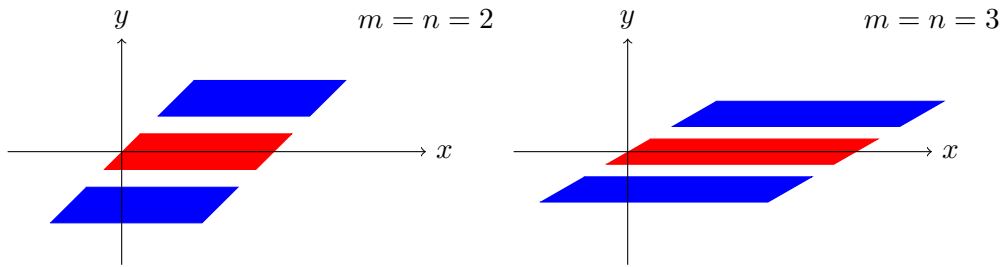
\begin{figure} \label{fig:Sketch2}
	\begin{center}
		
		\begin{tikzpicture}

		\filldraw[red] (-0.236,-0.236) -- (-0.236+2,-0.236) -- (0.236+2,0.236) -- (0.236,0.236);
		
		\filldraw[blue] (-0.236+3*0.236,-0.236+3*0.236) -- (-0.236+2+3*0.236,-0.236+3*0.236) -- (0.236+2+3*0.236,0.236+3*0.236) -- (0.236+3*0.236,0.236+3*0.236);
		
		\filldraw[blue] (-0.236-3*0.236,-0.236-3*0.236) -- (-0.236+2-3*0.236,-0.236-3*0.236) -- (0.236+2-3*0.236,0.236-3*0.236) -- (0.236-3*0.236,0.236-3*0.236);
		
		\draw[->] (0,-1.5) -- (0,1.5);
		\node[above] at (0,1.5) {$y$};
		\draw[->] (-1.5,0) -- (4,0);
		\node[right] at (4,0) {$x$};
		\node[above] at (4,1.5) {$m=n=2$};
		
		\end{tikzpicture}
		\begin{tikzpicture}
		
		\filldraw[red] (-0.289,-0.167) -- (-0.289+3,-0.167) -- (0.289+3,0.167) -- (0.289,0.167);
		
		\filldraw[blue] (-0.289+3*0.289,-0.167+3*0.167) -- (-0.289+3+3*0.289,-0.167+3*0.167) -- (0.289+3+3*0.289,0.167+3*0.167) -- (0.289+3*0.289,0.167+3*0.167);
		
		\filldraw[blue] (-0.289-3*0.289,-0.167-3*0.167) -- (-0.289+3-3*0.289,-0.167-3*0.167) -- (0.289+3-3*0.289,0.167-3*0.167) -- (0.289-3*0.289,0.167-3*0.167);
		
				\draw[->] (0,-1.5) -- (0,1.5);
		\node[above] at (0,1.5) {$y$};
		\draw[->] (-1.5,0) -- (4,0);
		\node[right] at (4,0) {$x$};
		\node[above] at (4,1.5) {$m=n=3$};
		\end{tikzpicture}

		\caption{The construction in Remark \ref{rem:Variants} (i) for $m= n = 2$ and $m = n = 3$. The red area is the support of $\tilde{\mu}_{m,n}$ and the blue area the supports of $\tilde{\nu}_{m,n}$.}
	\end{center}
\end{figure}

\normalem

%
%\bibliographystyle{abbrv}
%\bibliography{lit}	

\end{document}